\documentclass[a4paper,12pt]{amsart}

\pagestyle{headings}

\oddsidemargin=0cm \evensidemargin=0cm%
\textwidth=450pt

\makeindex

\usepackage{xcolor}
\usepackage[
colorlinks=true,
urlcolor=purple,
linkcolor=purple!87!black,
pdfborder={0 0 0}
]{hyperref}

\setcounter{tocdepth}{1}
\usepackage{amsfonts,graphics,amsmath,amsthm,amsfonts,amscd,amssymb,amsmath,latexsym,euscript, enumerate,kotex}
\usepackage{epsfig,url}
\usepackage{flafter}
\usepackage[all,cmtip,line]{xy}
\usepackage{array}
\usepackage[english]{babel}
\usepackage{overpic}
\usepackage{subfig}
\usepackage{multirow}
\usepackage{tikz-cd}
\usepackage{theoremref}

\usepackage{wrapfig}
\usepackage[shortlabels]{enumitem}
\setlist[enumerate, 1]{1\textsuperscript{o}}


\newtheorem{thm}{Theorem}[section]
\newtheorem{lem}[thm]{Lemma}

\newtheorem{coro}[thm]{Corollary}

\theoremstyle{definition} 
\newtheorem{defi}[thm]{Definition}
\newtheorem{definition-lemma}[thm]{Definition-Lemma}

\theoremstyle{remark}
\newtheorem{rmk}[thm]{Remark}

\newtheorem*{ack}{Acknowledgments}

\numberwithin{equation}{section}

\newcommand{\C}{\mathbb{C}}

\newcommand{\Q}{\mathbb{Q}}

\def\P{\mathbb{P}}

\DeclareRobustCommand{\O}{\mathcal{O}}



\newcommand{\floor}[1]{\left\lfloor #1 \right\rfloor}

\let\oldframe\frame
\renewcommand\frame[1][allowframebreaks]{\oldframe[#1]}



\title[The number of smooth varieties in an MMP on a 3-fold of Fano type]
{The number of smooth varieties in an MMP on a 3-fold of Fano type}
\begin{document}

\author{Donghyeon Kim}
\address{Department of mathematics, Yonsei University, 50 Yonsei-Ro, Seodaemun-Gu, Seoul 03722, Korea}
\email{narimial0@gmail.com (primary), whatisthat@yonsei.ac.kr (secondary)}

\date{\today}
\subjclass[2020]{14E30, 14F17}
\keywords{Minimal model program, Smooth varieties, Kodaira vanishing theorem}

\begin{abstract}
In this paper, we prove that for a threefold of Fano type $X$ and a movable $\mathbb{Q}$-Cartier Weil divisor $D$ on $X$, the number of smooth varieties that arise during the running of a $D$-MMP is bounded by $1 + h^1(X,2D)$. Additionally, we prove a partial converse to the Kodaira vanishing theorem for a movable divisor on a threefold of Fano type.
\end{abstract}

\maketitle

\section{Introduction}
It is well known that, unlike the case of surfaces, the outcomes of the minimal model program in the case of threefolds can result in singular varieties. This motivates the need to define and study singularities such as terminal singularities, kawamata log terminal singularities, and log canonical singularities in the minimal model program.

It is natural to ask how frequently the varieties produced by running a minimal model program are singular. We will particularly examine how many smooth varieties can be generated during the minimal model program for a threefold of Fano type.

Let $X$ be a threefold of Fano type and let $D$ be a movable $\Q$-Cartier Weil divisor on $X$. If $D=-K_X$, then \cite[Theorem 0]{Ben85} tells us that in a $-K_X$-MMP, there are no smooth varieties except to $X$ itself. The following theorem addresses the problem in the case of a $D$-MMP for a general $D$.

\begin{thm}\thlabel{main}
Let $X$ be a threefold of Fano type, and suppose that $D$ is a movable $\Q$-Cartier Weil divisor on $X$. Let
$$ X_0:=X\overset{f_1}{\dashrightarrow} X_1\overset{f_2}{\dashrightarrow} \cdots \overset{f_n}{\dashrightarrow} X_n$$
be a $D$ -MMP, $D_i$ the strict transform of $D$ to $X_i$, $m$ a positive integer, and let us denote by $N$ the number of $0\le i\le n$ such that $mD_i$ is Cartier. Then we have
$$ N\le 1+h^1(X,2mD).$$
\end{thm}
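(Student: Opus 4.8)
The plan is to reduce the theorem to a cohomological monotonicity statement along the MMP and to count strict drops. Since every Weil divisor on a smooth variety is Cartier, each smooth $X_i$ has $mD_i$ Cartier, so bounding $N$ is enough (and recovers the statement of the title). Write $h_i := h^1(X_i, 2mD_i)$, so $h_0 = h^1(X,2mD)$ and $h_i \ge 0$. I would prove two facts: (i) $h_{i+1} \le h_i$ for every step $f_{i+1}$; and (ii) $h_{i+1} \le h_i - 1$ whenever $mD_i$ is Cartier and $i < n$. Telescoping, $h_0 - h_n = \sum_{i=0}^{n-1}(h_i - h_{i+1}) \ge \#\{\,0 \le i \le n-1 : mD_i \text{ Cartier}\,\}$, so this count is at most $h_0 \le h^1(X,2mD)$; adding the possible Cartier index $i=n$ gives $N \le 1 + h^1(X,2mD)$.

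To compare consecutive terms I would run the Leray five-term sequence for each elementary contraction. A divisorial contraction is a morphism $\pi \colon X_i \to X_{i+1}$; since $D$ is movable, $\pi_*\mathcal{O}_{X_i}(2mD_i) = \mathcal{O}_{X_{i+1}}(2mD_{i+1})$ (both reflexive and agreeing in codimension one), so the five-term sequence embeds $H^1(X_{i+1},2mD_{i+1})$ into $H^1(X_i,2mD_i)$, giving (i). A flip I would factor as $X_i \xrightarrow{\phi} Z \xleftarrow{\phi^+} X_{i+1}$ with $\phi$ the $D_i$-negative contraction and $\phi^+$ the $D_{i+1}$-positive one, both pushing forward to $\mathcal{O}_Z(2mD_Z)$. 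On the positive side $2mD_{i+1}$ has nonnegative degree on the one-dimensional fibres of $\phi^+$, so $R^{>0}\phi^+_*\mathcal{O}_{X_{i+1}}(2mD_{i+1}) = 0$ and $h_{i+1} = h^1(Z,2mD_Z)$. On the negative side the sequence reads $0 \to H^1(Z,2mD_Z) \to H^1(X_i,2mD_i) \to H^0(Z, R^1\phi_*\mathcal{O}_{X_i}(2mD_i)) \to H^2(Z,2mD_Z)$, giving $h_i \ge h^1(Z,2mD_Z) = h_{i+1}$, which is (i); and the drop is strict exactly when $H^1(X_i,2mD_i) \to H^0(Z, R^1\phi_*(\cdots))$ is nonzero. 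The divisorial case uses the same sequence with $Z$ replaced by $X_{i+1}$.

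The heart of the matter is producing the strict drop (ii) from Cartier-ness. Since $mD_i$ is Cartier and the extremal curve $C$ satisfies $D_i \cdot C < 0$, the integer $mD_i \cdot C$ is at most $-1$, so $2mD_i \cdot C \le -2$; restricting $\mathcal{O}(2mD_i)$ to $C \cong \mathbb{P}^1$ yields a line bundle with $h^1 \ne 0$, whence $R^1\phi_*\mathcal{O}_{X_i}(2mD_i) \ne 0$. This is precisely the role of the factor $2$: for $mD_i$ alone the degree could be $-1$ and $h^1(\mathbb{P}^1,\mathcal{O}(-1)) = 0$. This relative non-vanishing, the failure of ampleness on a Cartier divisor, is the partial converse to Kodaira vanishing of the abstract. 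To turn $R^1\phi_* \ne 0$ into a genuinely nonzero image — hence a strict drop — I would invoke the global vanishing $H^2(Z, 2mD_Z) = 0$, which kills the final term of the five-term sequence and forces the connecting map onto the nonzero, finitely supported sheaf $R^1\phi_*$.

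The main obstacle is exactly this global vanishing $H^2(Y, 2mD_Y) = 0$ for a Fano type threefold $Y$ and movable $D_Y$, together with the relative vanishing on the positive side: neither is immediate from Kawamata–Viehweg, since $2mD_Y - K_Y$ is big but need not be nef while $D_Y$ is not yet nef. I expect to extract the $H^2$-vanishing from the Fano type hypothesis, using that such threefolds have rational singularities with $h^{>0}(\mathcal{O}_Y) = 0$ and transporting Kawamata–Viehweg vanishing from a small $\mathbb{Q}$-factorial model on which $D_Y$ is nef back across the intervening small contractions, whose relevant higher direct images vanish. Controlling the $R^1$ computations when the flipping locus is reducible or non-reduced, and confirming that $\pi_*$ and $\phi_*$ return the reflexive sheaves $\mathcal{O}(2mD_{\bullet})$, are the remaining technical points; once these are settled, the telescoping count of the first paragraph closes the argument.
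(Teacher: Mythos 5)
Your outline reproduces the paper's own strategy step for step: the telescoping count of strict drops, the factorization of each flip through the base $Z$ of the flipping contraction, the five\hyphenation{five-term}-term Leray sequence, the backwards transport of $H^2$-vanishing from the nef end $X_n$ of the MMP, and the strict drop forced by Kawamata's extremal rational curve together with the factor $2$ (degree $\le -2$ rules out the loophole $h^1(\mathbb{P}^1,\mathcal{O}_{\mathbb{P}^1}(-1))=0$). The genuine gap is in the step that carries all the weight. You assert $R^{>0}\phi^+_*\mathcal{O}_{X_{i+1}}(2mD_{i+1})=0$ because ``$2mD_{i+1}$ has nonnegative degree on the one-dimensional fibres of $\phi^+$''. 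This argument would be fine if $2mD_{i+1}$ were Cartier, but it is only a $\mathbb{Q}$-Cartier \emph{Weil} divisor: $\mathcal{O}_{X_{i+1}}(2mD_{i+1})$ is a reflexive divisorial sheaf, its restriction to a fibre is not a line bundle, and no degree computation applies to it. This relative vanishing, together with the pushforward identification $\phi_*\mathcal{O}_{X_i}(2mD_i)=\phi^+_*\mathcal{O}_{X_{i+1}}(2mD_{i+1})$, is precisely the content the paper supplies through Lemma \ref{van1} (Fujino's vanishing for nef $\mathbb{Q}$-Cartier Weil divisors on Fano type varieties), Lemma \ref{van2}, and the $L_{X',D}+\lfloor E_{X',D}\rfloor$ machinery on a common resolution combined with Koll\'ar's Lemma \ref{lemma:>-1}; it genuinely uses the klt/Fano type hypothesis. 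Your own diagnosis of the obstacle is misplaced: relative nefness is not the problem ($2mD_{i+1}$ is even $\phi^+$-ample, and only relative positivity is needed for a relative statement); the problem is Weil versus Cartier, and nothing in your proposal addresses it.

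Two further points. First, your plan for the global vanishing $H^2(X_j,\mathcal{O}_{X_j}(2mD_j))=0$ has the correct base case (Fujino's vanishing at $X_n$, where $D_n$ is nef) but a wrong transport mechanism: you propose to move the vanishing backwards ``across the intervening small contractions, whose relevant higher direct images vanish'', yet on the negative side of each flip $R^1\varphi_{j*}\mathcal{O}_{X_{j-1}}(2mD_{j-1})$ does \emph{not} vanish in general --- its nonvanishing is exactly what produces your strict drop. What makes the induction work in the paper is the surjectivity of the edge map $H^2(Z_{j+1},\varphi_{(j+1)*}(\cdot))\to H^2(X_j,\cdot)$, forced by $R^2\varphi_{(j+1)*}=0$ (fibres of dimension $\le 1$) and the zero-dimensional support of $R^1\varphi_{(j+1)*}$, combined again with the Weil-divisor vanishing on the positive side. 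Second, since $D$ is movable, every step of the MMP is a flip (Lemma \ref{lemlem}), so your divisorial-contraction case is vacuous --- fortunately, because the justification you give there is false in general: the pushforward of a reflexive sheaf under a birational contraction need not be reflexive (e.g.\ $\pi_*\mathcal{O}_X(-E)$ for a blow-up is an ideal sheaf of the center). Finally, Kawamata's theorem yields a rational curve $C$, not necessarily $C\cong\mathbb{P}^1$; one should pass to the normalization, as the paper does, or use $h^1=p_a(C)-1-\deg>0$ for line bundles of degree $\le -2$ on an irreducible curve.
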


Note that if we put $m=1$, then we obtain the following.

\begin{coro}\thlabel{minicoro}
Let $X$ be a threefold of Fano type, and suppose that $D$ is a movable $\Q$-Cartier Weil divisor on $X$. Let
$$ X_0:=X\overset{f_1}{\dashrightarrow} X_1\overset{f_2}{\dashrightarrow} \cdots \overset{f_n}{\dashrightarrow} X_n$$
be a $D$-MMP. Then the number of $X_i$ that is smooth is $\le 1+h^1(X,2D)$.
\end{coro}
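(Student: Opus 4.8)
The plan is to obtain this as the special case $m=1$ of \thref{main}. Taking $m=1$ there, the quantity $N$ is precisely the number of indices $0\le i\le n$ for which the Weil divisor $D_i$ itself is Cartier, and the theorem bounds this number by $1+h^1(X,2D)$. So I would reduce the corollary to a single implication: that whenever $X_i$ is smooth, the strict transform $D_i$ is Cartier, so that every smooth model is already accounted for by the count $N$.

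That implication is the only thing requiring justification, and it is standard: a smooth variety is locally factorial, since its local rings are regular and hence unique factorization domains, so on a smooth $X_i$ every Weil divisor is Cartier. In particular $D_i$ is Cartier, and the set of indices with $X_i$ smooth is contained in the set of indices with $D_i$ Cartier. Combining this with \thref{main} at $m=1$ yields
\[
\#\{\,0\le i\le n : X_i\ \text{smooth}\,\}\ \le\ \#\{\,0\le i\le n : D_i\ \text{Cartier}\,\}\ =\ N\ \le\ 1+h^1(X,2D),
\]
which is exactly the asserted bound.

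I do not expect any genuine obstacle at the level of the corollary itself: all of the substantive work is carried out in \thref{main}, whose engine is the partial converse to Kodaira vanishing together with the flip-by-flip cohomological comparison forcing $h^1(X_i,2D_i)$ to drop each time $D_i$ is Cartier. The one point worth flagging is that the reverse implication fails---$D_i$ can be Cartier even when $X_i$ is singular (e.g. merely locally factorial or Gorenstein)---so $N$ may strictly exceed the number of smooth models; this overcounting is harmless, since we only seek an upper bound on the number of smooth varieties.
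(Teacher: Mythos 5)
Your proposal is correct and matches the paper's own proof exactly: both set $m=1$ in \thref{main}, note that a Weil divisor on a smooth variety is Cartier (so the smooth count is at most $N$), and conclude $N\le 1+h^1(X,2D)$. No gaps, and your remark about the implication being one-directional is a fair (if unneeded) observation.
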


Another corollary of \thref{main} is a partial converse of the Kodaira vanishing theorem for threefolds of Fano type. Kodaira vanishing theorem tells us that if $X$ is of Fano type and $D$ is a nef Cartier divisor on $X$, then $H^i(X,\O_X(D))=0$ for $i>0$. In the threefold case, we can get a partial converse of the Kodaira vanishing theorem if $D$ is movable.

\begin{coro}\thlabel{coro}
Let $X$ be a threefold of Fano type, and suppose that $D$ is a movable $\Q$-Cartier Weil divisor on $X$. If $H^1(X,\O_X(mD))=0$ for any sufficiently divisible $m$, then $D$ is nef.
\end{coro}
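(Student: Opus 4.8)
The plan is to deduce \thref{coro} from \thref{main} by contraposition: assuming $D$ is not nef, I will exhibit a sufficiently divisible integer $m'$ with $H^1(X,\O_X(m'D))\neq 0$, contradicting the hypothesis. Since $X$ is of Fano type, it is a Mori dream space; in particular it is $\Q$-factorial, and a $D$-MMP
$$X_0=X\overset{f_1}{\dashrightarrow}X_1\overset{f_2}{\dashrightarrow}\cdots\overset{f_n}{\dashrightarrow}X_n$$
exists and terminates. Each intermediate $X_i$ remains $\Q$-factorial, so every strict transform $D_i$ — in particular $D_0=D$ and the terminal $D_n$ — is $\Q$-Cartier. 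Because $D$ is movable, hence pseudoeffective, this MMP consists of flips and terminates at a minimal model on which $D_n$ is nef, rather than at a Mori fiber space.

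Next I would use the failure of nefness to force the MMP to be nontrivial. A $D$-MMP halts precisely when the current strict transform becomes nef; since $D_0=D$ is assumed not nef, there is at least one $D$-negative extremal contraction, so $n\geq 1$ and the indices $0$ and $n$ are genuinely distinct. I would then fix a positive integer $m$ that is divisible enough so that both $mD_0$ and $mD_n$ are Cartier and, simultaneously, so that $2m$ is sufficiently divisible in the sense of the hypothesis of \thref{coro}; this is possible since all of these are divisibility conditions on $m$. With such an $m$, the two distinct indices $i=0$ and $i=n$ each satisfy that $mD_i$ is Cartier, so the count $N$ appearing in \thref{main} satisfies $N\geq 2$.

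Applying \thref{main} with this $m$ yields $2\leq N\leq 1+h^1(X,2mD)$, whence $h^1(X,\O_X(2mD))\geq 1$, that is, $H^1(X,\O_X(2mD))\neq 0$. As $2m$ was chosen sufficiently divisible, this contradicts the assumption that $H^1(X,\O_X(m'D))=0$ for all sufficiently divisible $m'$. Therefore $D$ is nef.

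The substantive input is entirely \thref{main}; the ancillary facts — propagation of $\Q$-factoriality along the MMP, and the existence and termination of the $D$-MMP with nef terminal $D_n$ for a movable divisor on a Fano type threefold — are standard in this setting. The only point demanding care is the bookkeeping that $mD_0$ and $mD_n$ contribute two \emph{separate} indices to $N$, which is exactly where the hypothesis that $D$ is not nef (forcing $n\geq 1$) enters; I anticipate no serious obstacle beyond confirming these standard MMP facts are available as used.
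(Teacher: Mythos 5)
Your core reduction --- run a $D$-MMP to a nef model, choose $m$ so that the relevant $mD_i$ are all Cartier, and play the count $N$ off against the bound $1+h^1(X,2mD)$ from \thref{main} --- is exactly the engine of the paper's proof (the paper argues directly rather than by contraposition: all $mD_i$ Cartier gives $N=n+1\le 1$, hence $n=0$; that difference is immaterial). However, there is a genuine gap at your very first step: you assert that since $X$ is of Fano type it is a Mori dream space, ``in particular it is $\Q$-factorial.'' This implication runs backwards. $\Q$-factoriality (together with $\Pic(X)_{\Q}=N^1(X)_{\Q}$) is part of the Hu--Keel \emph{definition} of a Mori dream space; it is a hypothesis of, not a consequence of, the result in \cite{BCH+10} that produces Mori dream spaces from log Fano pairs. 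Being of Fano type in the sense of this paper does not imply $\Q$-factoriality: the quadric cone in $\P^4$ (the cone over a smooth quadric surface) is a Gorenstein terminal Fano threefold, hence of Fano type, yet it is not $\Q$-factorial, since either ruling of the quadric gives a Weil divisor that is not $\Q$-Cartier. Consequently, for the $X$ in the statement you have no license to run a $D$-MMP at all: the existence of the flips, their termination, and the fact that the program ends with $D_n$ nef are precisely what the Mori-dream-space machinery supplies, and that machinery is unavailable without $\Q$-factoriality.

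This is not a cosmetic omission; it is the reason the paper's proof has two halves. The paper first takes a \emph{small $\Q$-factorialization} $f\colon X'\to X$ (\cite[Theorem 4.1]{Fuj11}), checks that $X'$ is again of Fano type and that $f^*D$ is still movable (smallness of $f$ is used here), and --- the step your shortcut silently skips --- transfers the hypothesis $H^1(X,\O_X(mD))=0$ to $H^1(X',\O_{X'}(f^*(mD)))=0$, which requires the rationality of klt singularities together with Grothendieck/Leray spectral sequences. Only then is $X'$ a Mori dream space, so the $D$-MMP can be run on $X'$, where your counting argument applies verbatim and yields that $f^*D$ is nef; finally, nefness must be brought back down to $D$ on $X$ (the paper does this via semiampleness of $f^*D$ on the Mori dream space $X'$). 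If you insert these reductions, your argument becomes the paper's proof; as written, it establishes the corollary only under the unstated extra hypothesis that $X$ is $\Q$-factorial.
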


The proof of \thref{main} closely follows the argument of \cite[The proof of Theorem 1.4]{Kim24}. The proof is divided into four steps. In the first step, we run a $D$-MMP. In the second step, we define 
$$ d_i:=h^1(X_i,2mD_i).$$
Utilizing the techniques developed in \cite{Kim25}, we establish $d_{i-1} \geq d_i$. In the third step, we show that if $d_{i-1} = d_i$, then $mD_i$ cannot be Cartier. Finally, in the fourth step, we conclude the proof of the main theorem by combining the results of the previous steps.

\begin{ack}
The author thanks his advisor, Sung Rak Choi, for his comments, questions, and discussions. He is grateful for his encouragement and support. The author is grateful to the anonymous referee for their thoughtful comments and suggestions. The author is partially supported by the Samsung Science and Technology Foundation under Project
Number SSTF-BA2302-03.
\end{ack}

\section{Preliminaries}
The aim of this section is to introduce the notions and lemmas which will be used in later sections. Let us collect the basic notions.

\begin{itemize}
    \item All varieties are integral schemes of finite type over the complex number field $\C$.
    \item For a normal variety $X$, and a boundary $\Q$-Weil divisor $\Delta$ on $X$, we say that $(X,\Delta)$ is a \emph{pair} if $K_X+\Delta$ is a $\Q$-Cartier divisor.
    \item Let $(X,\Delta)$ be a pair, and $E$ a prime divisor over $X$. Suppose $f:X'\to X$ is a log resolution of $(X,\Delta)$ in which $E$ is an $f$-exceptional divisor, and $\Delta_{X'}$ is a divisor on $X'$ such that
    $$ K_{X'}+\Delta_{X'}=f^*(K_X+\Delta)$$
    holds. The \emph{discrepancy} $a(E;X,\Delta)$ is defined by
    $$ a(E;X,\Delta):=\mathrm{mult}_E(-\Delta_{X'}).$$
    Moreover, $(X,\Delta)$ is \emph{klt} if for any prime divisor $E$ over $X$, $a(E;X,\Delta)>-1$ holds.
\end{itemize}

Let us define the notion of \emph{log Fano pair}. For more details, we recommend referring to \cite{PS09}.

\begin{defi}
We say that a pair $(X,\Delta)$ is a \emph{log Fano pair} if $(X,\Delta)$ is klt and $-(K_X+\Delta)$ is ample. Moreover, for a normal projective variety $X$, $X$ is of \emph{Fano type} if there is an effective $\Q$-Weil divisor $\Delta$ on $X$ such that $(X,\Delta)$ is a log Fano pair.
\end{defi}

\begin{rmk}\thlabel{rmk}
Let $X$ be a normal projective variety. Then $X$ is of Fano type if and only if there is a big and effective $\Q$-Weil divisor $\Delta$ on $X$ such that $(X,\Delta)$ is klt, and $K_X+\Delta\sim_{\Q}0$.
\end{rmk}

Let us define the notion of \emph{movable divisor}.

\begin{defi}
Let $X$ be a normal projective variety, and $D$ an effective $\Q$-Cartier $\Q$-Weil divisor. We say that $D$ is \emph{movable} if there is a positive integer $m$ such that $mD$ is Cartier, and $|mD|$ has no fixed component.
\end{defi}

The following lemma explains why we impose the movability condition on \thref{main}.

\begin{lem}\thlabel{lemlem}
Let $X$ be a normal projective variety, and $D$ an effective $\Q$-Cartier $\Q$-Weil divisor. If $D$ is movable, then any $D$-MMP consists only of flips.
\end{lem}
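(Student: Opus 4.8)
The plan is to rule out divisorial contractions: every step of a $D$-MMP is either a divisorial contraction or a flip (a fiber-type contraction would instead terminate the program), so once no step can be a divisorial contraction, the MMP consists only of flips. The guiding principle I would use is that a movable divisor has nonnegative intersection with any covering family of curves. Precisely, fix $m$ with $mD$ Cartier and $|mD|$ free of fixed components, and suppose a family of curves $\{C_t\}$ sweeps out a dense subset of a prime divisor $E$. Since $|mD|$ has no fixed component, $E$ is not contained in the base locus, so there is a member $M\in|mD|$ with $E\not\subseteq\Supp M$; choosing $C_t$ through a general point of $E\setminus\Supp M$ yields $C_t\not\subseteq\Supp M$, and hence $mD\cdot C_t=M\cdot C_t\ge 0$, i.e. $D\cdot C_t\ge 0$.

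First I would set up an induction along the MMP so that the movability hypothesis remains available at each stage. Suppose $f_{i+1}\colon X_i\dashrightarrow X_{i+1}$ is the first step that is not a flip, and write $D_i$ for the strict transform of $D$ on $X_i$. Because a flip is an isomorphism in codimension one, the preceding flips identify the linear systems $|mD_j|$ together with their fixed parts across the models; consequently $D_i$ is again an effective movable divisor (its strict transform stays $\Q$-Cartier and $|mD_i|$ still has no fixed divisorial component). Thus it suffices to derive a contradiction from the assumption that $f_{i+1}$ is a divisorial contraction.

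If $f_{i+1}\colon X_i\to X_{i+1}$ is the divisorial contraction of a $D_i$-negative extremal ray $R$, then it contracts a prime divisor $E=\Exc(f_{i+1})$ to a subvariety of strictly smaller dimension. The fibers of $f_{i+1}|_E$ are therefore positive-dimensional and sweep out $E$, so $E$ is covered by a family of curves $C$ whose numerical classes lie in $R$; for these one has $D_i\cdot C<0$. Applying the covering-family observation of the first paragraph to this family produces a curve $C\subseteq E$ with $D_i\cdot C\ge 0$, contradicting $D_i\cdot C<0$. Hence $f_{i+1}$ cannot be a divisorial contraction, and by induction every step of the MMP is a flip. (The very same covering argument, applied to the fibers of a fiber-type contraction, shows the program can never terminate in a Mori fiber space either.)

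I expect the only delicate points to be the two geometric inputs. The first is that the strict transform of a movable divisor across a flip remains movable; this rests entirely on a flip being an isomorphism in codimension one, so that it neither creates nor destroys fixed divisorial components and preserves $\Q$-Cartierness of the strict transform. The second is that the exceptional divisor of a $D_i$-negative divisorial contraction is genuinely covered by curves of the contracted ray, which is standard for extremal contractions since $\dim E>\dim f_{i+1}(E)$. Once these are in place, the numerical contradiction $D_i\cdot C<0$ versus $D_i\cdot C\ge 0$ is immediate.
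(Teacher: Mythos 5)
Your proof is correct, but it rules out divisorial contractions by a genuinely different mechanism than the paper. The shared part is the reduction: both you and the paper assume the first non-flip step $f_i\colon X_{i-1}\to X_i$ is a divisorial contraction and use that the preceding flips, being isomorphisms in codimension one, identify $H^0(X,\O_X(mD))$ with $H^0(X_{i-1},\O_{X_{i-1}}(mD_{i-1}))$, so that $D_{i-1}$ is still movable. From there the paper argues divisor-theoretically: by the Negativity lemma, $D_{i-1}=f_i^*D_i+E$ with $E$ a nonzero effective $f_i$-exceptional divisor (nonzero precisely because the contracted ray is $D_{i-1}$-negative), and since sections of $\O_{X_{i-1}}(mD_{i-1})$ correspond to sections of $\O_{X_i}(mD_i)$ pulled back and augmented by $mE$, the divisor $E$ is a fixed component of $|mD_{i-1}|$, contradicting movability directly at the level of linear systems. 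You argue numerically instead: the exceptional divisor of a $D_{i-1}$-negative divisorial contraction is swept out by contracted curves whose classes lie in the negative extremal ray, whereas movability forces a general member of any family of curves covering a prime divisor to meet $D_{i-1}$ nonnegatively. Both arguments are complete. Yours is more elementary in that it avoids the Negativity lemma, and it yields a bit more: as you note, the same covering-family computation also excludes a Mori fiber space outcome, a case the paper's proof leaves untouched. The paper's argument, in exchange, exhibits the concrete geometric obstruction --- the exceptional divisor itself becomes a fixed component of the linear system --- which matches the way movability is defined and used elsewhere in the paper. One minor point that both write-ups gloss over: after a flip only $\Q$-Cartierness of $D_{i-1}$ is automatic, so the integer $m$ witnessing movability may need to be replaced by a multiple; this is harmless, since absence of fixed components passes to multiples of the linear system.
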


\begin{proof}
Let
$$ X_0:=X\overset{f_1}{\dashrightarrow} X_1\overset{f_2}{\dashrightarrow} \cdots \overset{f_n}{\dashrightarrow} X_n$$
be a $D$-MMP, and suppose that $f_i$ is a divisorial contraction for some $1\le i\le n$. Let us write $D_k$ the strict transform of $D$ on $X_k$ for $1\le k\le n$. We may assume that for all $1\le j<i$, $f_j$ are flips. Then by the fact that
$$ H^0(X,\O_X(mD))=H^0(X_{i-1},\O_{X_{i-1}}(mD_{i-1}))$$
for any positive integer $m$, $D_{i-1}$ is movable. Moreover, by the Negativity lemma, there is a non-zero effective $f_i$-exceptional divisor $E$ such that
$$ D_{i-1}=f^*_iD_i+E$$
holds. Then for any positive integer $m$ such that $mD_i$ is Cartier, any element of $|mD_{i-1}|$ contains $E$, and thus $D_{i-1}$ is not movable. Contradiction.
\end{proof}

The following is \cite[Lemma 2.7]{Kim24}, and we state the lemma for the convenience of readers. It may be well-known to experts.

\begin{lem}\thlabel{KRss}
Let $X$ be a normal projective variety, $D$ a Cartier divisor on $X$, and $f:X\dashrightarrow X^+$ a $D$-flip. Suppose
$$
\begin{tikzcd}
X\ar[dashed,"f"]{rr}\ar["\varphi"']{rd}& & X^{+} \ar["\varphi^+"]{ld}\\
& Z&
\end{tikzcd}
$$
is the diagram corresponding to $f$. Then there is a common resolution $g:X'\to X$ and $g':X'\to X^+$ such that the diagram
$$
\begin{tikzcd}
 & X'\ar["g"']{ld}\ar["g'"]{rd}&\\
    X\ar[dashed,"f"]{rr}\ar["\varphi"']{rd}& & X^{+} \ar["\varphi^+"]{ld}\\
& Z&
\end{tikzcd}
$$
is commutative.
\end{lem}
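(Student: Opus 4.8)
The plan is to produce $X'$ as a resolution of singularities of the closure of the graph of $f$, taken inside the fibre product $X\times_Z X^+$; organising the construction over $Z$ makes the required commutativity almost automatic. First I would let $U\subseteq Z$ be the open locus over which both small contractions $\varphi$ and $\varphi^+$ are isomorphisms (its complement is the image of the flipping loci, which has codimension $\ge 2$), so that $f=(\varphi^+)^{-1}\circ\varphi$ is an isomorphism over $U$ and in particular a birational map \emph{over} $Z$. The graph of $f|_{U}$ consists of pairs $(x,f(x))$ with $\varphi(x)=\varphi^+(f(x))$, hence lands in the fibre product; let $\Gamma\subseteq X\times_Z X^+$ be its closure, with the two projections $p:\Gamma\to X$ and $q:\Gamma\to X^+$.

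Since $\Gamma$ dominates both $X$ and $X^+$ and restricts to an isomorphism over $U$ on each side, $p$ and $q$ are projective birational morphisms. Next I would apply resolution of singularities (available as we work over $\C$) to obtain a smooth $X'$ together with a projective birational morphism $\pi:X'\to\Gamma$, and set $g:=p\circ\pi$ and $g':=q\circ\pi$. Both $g$ and $g'$ are then projective birational morphisms, so $X'$ is a common resolution sitting above both $X$ and $X^+$, as required.

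Finally, commutativity is forced by the universal property of the fibre product: the projections satisfy $\varphi\circ p=\varphi^+\circ q$ by the very definition of $X\times_Z X^+$, whence $\varphi\circ g=\varphi\circ p\circ\pi=\varphi^+\circ q\circ\pi=\varphi^+\circ g'$. I do not anticipate a genuine obstacle; the only real input is resolution of singularities for $\Gamma$, and the flip structure enters only to guarantee that $f$ is a birational map over $Z$, so that its graph lies in the fibre product. If one prefers to avoid the fibre product, the same commutativity can instead be deduced from separatedness of $Z$: the morphisms $\varphi\circ g$ and $\varphi^+\circ g'$ agree on the dense open preimage of $U$, hence agree on all of $X'$.
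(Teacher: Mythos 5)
Your proof is correct. Note that the paper itself contains no proof of this lemma --- it is quoted verbatim from \cite[Lemma 2.7]{Kim24} for the reader's convenience --- so there is no in-paper argument to compare against; your construction is the standard one and fills the gap correctly. Specifically, taking the closure $\Gamma$ of the graph of $f$ inside $X\times_Z X^+$ (legitimate because $\varphi$ and $\varphi^+$ are small birational contractions, so $f$ is a birational map over $Z$ and is an isomorphism over a dense open $U\subseteq Z$), resolving $\Gamma$ by Hironaka, and deducing commutativity either from the universal property of the fibre product or from separatedness of $Z$ together with agreement on the dense open preimage of $U$, yields exactly the asserted diagram; both the fibre-product route and the separatedness route you sketch are sound.
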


Let us recall \cite[Lemma 4.7]{Kim25}.

\begin{definition-lemma} \thlabel{6}
Let $X$ be a normal variety, and suppose that $D$ is a $\Q$-Cartier Weil divisor on $X$. For a resolution $f:X'\to X$, we denote by $L_{X',D}$ a Cartier divisor on $X'$ such that
$$ (f^*\O_{X'}(D))^{\vee\vee}=\O_{X'}(L_{X',D})$$
holds, and $E_{X'}$ an effective $f$-exceptional divisor such that $f^*D\sim_{\Q} L_{X',D}+E_{X',D}$ holds.
\end{definition-lemma}

Note that the choice of $L_{X',D}$ and $E_{X',D}$ may not be unique.

Let us state the vanishing theorem for nef Cartier divisors on a Fano type variety to nef $\Q$-Cartier Weil divisors.

\begin{lem}[{cf. \cite[Corollary 5.7.7]{Fuj17}}]\thlabel{van1}
Let $X$ be a variety of Fano type, $\varphi:X\to Z$ a contraction, and $D$ a nef $\Q$-Cartier Weil divisor on $X$. Then
$$ R^i\varphi_*\O_X(D)=0\text{ for all }i>0.$$
\end{lem}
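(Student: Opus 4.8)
The plan is to use the Fano type hypothesis to manufacture an ample correction, so that the merely nef divisor $D$ becomes the ``nef and big part'' of a Kawamata--Viehweg setup, and then to transfer the resulting vanishing through a resolution by means of the divisor $L_{X',D}$ of \thref{6}. First I would record the ample correction. By \thref{rmk} there is a big effective $\Q$-Weil divisor $\Delta$ with $(X,\Delta)$ klt and $K_X+\Delta\sim_\Q 0$. Writing $\Delta\sim_\Q A+E$ with $A$ ample and $E\ge 0$ (possible since $\Delta$ is big), for $0<\epsilon\ll 1$ the pair $(X,\Delta_\epsilon)$ with $\Delta_\epsilon:=(1-\epsilon)\Delta+\epsilon E$ is still klt and satisfies $K_X+\Delta_\epsilon\sim_\Q-\epsilon A$. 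Hence $D-(K_X+\Delta_\epsilon)\sim_\Q D+\epsilon A$ is the sum of a nef and an ample divisor, so it is ample, in particular $\varphi$-nef and $\varphi$-big. This reduces the lemma to a relative Kawamata--Viehweg vanishing for the $\Q$-Cartier Weil divisor $D$: if $(X,\Delta_\epsilon)$ is klt and $D-(K_X+\Delta_\epsilon)$ is $\varphi$-nef and $\varphi$-big, then $R^i\varphi_*\O_X(D)=0$ for $i>0$. Note that the one place where ``Fano type'' (rather than merely $K_X\sim_\Q 0$) is used is in extracting the genuinely ample $A$, which is exactly what upgrades the nef $D$ to something big.

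Next I would pass to a resolution. Take a log resolution $f\colon X'\to X$ of $(X,\Delta_\epsilon)$ together with $\Supp D$, and let $L_{X',D},E_{X',D}$ be as in \thref{6}, so that $L_{X',D}\sim_\Q f^*D-E_{X',D}$ with $E_{X',D}\ge 0$ exceptional. Writing $K_{X'}+\Delta_{\epsilon,X'}=f^*(K_X+\Delta_\epsilon)$ with $\Delta_{\epsilon,X'}$ snc of coefficients $<1$, a direct substitution gives
\[
L_{X',D}\sim_\Q K_{X'}+\Delta_{\epsilon,X'}-E_{X',D}+f^*\bigl(D-(K_X+\Delta_\epsilon)\bigr).
\]
Here the fractional part of $\Delta_{\epsilon,X'}$ furnishes an snc boundary with zero round-down, and the pulled-back ample class $f^*\bigl(D-(K_X+\Delta_\epsilon)\bigr)$ furnishes the $\psi$-nef and $\psi$-big part, where $\psi:=\varphi\circ f$. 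Since $\O_X(D)$ is reflexive and $f$ is birational one has $f_*\O_{X'}(L_{X',D})=\O_X(D)$; combined with local vanishing $R^{>0}f_*\O_{X'}(L_{X',D})=0$ and the Leray spectral sequence, this identifies $R^i\varphi_*\O_X(D)$ with $R^i\psi_*\O_{X'}(L_{X',D})$, which I would then kill by Kawamata--Viehweg vanishing on the smooth variety $X'$.

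The main obstacle is the bookkeeping on $X'$: the integral divisor $L_{X',D}$ must be matched exactly with a round-up $K_{X'}+\lceil(\text{boundary})+(\psi\text{-nef-big})\rceil$ of the shape demanded by Kawamata--Viehweg, and this forces one to track how the effective exceptional divisor $E_{X',D}$ interacts with the (possibly negative) discrepancy coefficients of $\Delta_{\epsilon,X'}$. A naive split leaves an effective $f$-exceptional excess that spoils nefness of the positive part, so the point is to absorb that excess using that it is $f$-exceptional---precisely the role played by \thref{6} and the accompanying techniques of \cite{Kim25}---both when verifying $R^{>0}f_*\O_{X'}(L_{X',D})=0$ and when arranging relative nef-bigness over $Z$. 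Establishing this compatibility, rather than the formal vanishing input, is where the real work lies.
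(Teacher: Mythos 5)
Your opening move is fine, and it is the right way to spend the Fano type hypothesis: via \thref{rmk} and Kodaira's lemma you produce $(X,\Delta_\epsilon)$ klt with $D-(K_X+\Delta_\epsilon)\sim_{\Q}D+\epsilon A$ ample, reducing \thref{van1} to a relative Kawamata--Viehweg statement for $\Q$-Cartier Weil divisors. (For calibration: the paper gives no proof of this lemma at all, it simply cites \cite[Corollary 5.7.7]{Fuj17}, so your attempt is measured against the argument its toolkit supports.) The genuine gap is the second half, exactly the part you defer as ``where the real work lies.'' Your plan is to keep the divisor $L_{X',D}$ fixed, prove $R^{>0}f_*\O_{X'}(L_{X',D})=0$, identify $R^i\varphi_*\O_X(D)$ with $R^i\psi_*\O_{X'}(L_{X',D})$, and then kill the latter by Kawamata--Viehweg on $X'$, ``absorbing'' the exceptional excess into the nef-and-big part because it is $f$-exceptional. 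Neither half of this is available. First, the local vanishing the paper actually provides (\thref{van2}) is for $L_{X',D}+\floor{E_{X',D}}$, not for $L_{X',D}$; the floor term is not cosmetic, since the proof of \thref{van2} works only by enlarging the divisor and then descending. Second, and more seriously, a term of the form $-\bigl(E_{X',D}+\text{positive-discrepancy divisors}\bigr)$ cannot be absorbed into a Kawamata--Viehweg decomposition of $L_{X',D}$: its coefficients need not lie in $(-1,0]$, so it cannot be moved into the boundary after rounding, and $f^*(\text{ample})$ minus a nonzero effective $f$-exceptional divisor is in general not $\psi$-nef (an $f$-exceptional prime divisor typically meets some $f$-contracted, hence $\psi$-contracted, curve positively). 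Being $f$-exceptional buys you nothing for nefness; it only helps at the level of pushforwards. Since Kawamata--Viehweg genuinely does not apply to $L_{X',D}$ over $Z$, and the only other route to $R^i\psi_*\O_{X'}(L_{X',D})=0$ would be the very isomorphism with $R^i\varphi_*\O_X(D)$ you are trying to exploit, the plan as stated is circular at its key step.

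The correct maneuver is to change the divisor rather than the decomposition, which is precisely what \thref{van2} together with \thref{lemma:>-1} is designed to do. Run the construction in the proof of \thref{van2} with the boundary $\Delta_\epsilon$: in its notation, the divisor $M:=F+L_{X',D}+\floor{E_{X',D}}+E'$ satisfies, globally and not just relatively over $X$,
$$ M\sim_{\Q}K_{X'}+B+f^*\bigl(D-(K_X+\Delta_\epsilon)\bigr),\qquad B:=F'-\{E_{X',D}\}+E',\quad \floor{B}=0,$$
with $B$ simple normal crossing. Kawamata--Viehweg applied to $M$ over $X$ gives $R^{>0}f_*\O_{X'}(M)=0$ (the positive part is $f$-numerically trivial and $f$ is birational, so it is $f$-nef and $f$-big), and applied over $Z$ gives $R^{>0}\psi_*\O_{X'}(M)=0$, because the $\psi$-nef and $\psi$-big part is exactly $f^*(\text{ample})$, untouched by any exceptional correction. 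Finally, \thref{lemma:>-1} (taking the divisor called $B$ in that lemma to be $F+E'$, exactly as in \eqref{rf1}) gives $f_*\O_{X'}(M)=\O_X(D)$, since $F+\floor{E_{X',D}}+E'$ is effective and $f$-exceptional. The Leray/Grothendieck spectral sequence then yields $R^i\varphi_*\O_X(D)\cong R^i\psi_*\O_{X'}(M)=0$. In short: the exceptional excess must be \emph{added} to the divisor and then erased at the level of $f_*$ by \thref{lemma:>-1}, not subtracted from the positive part; identifying this enlargement is the missing idea, and without it the proposal does not close.
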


The following lemma is essentially the same as that of \cite[Theorem 1.5]{Kim25}, and we include the proof for completeness.

\begin{lem}\thlabel{van2}
Let $(X,\Delta)$ be a klt pair, $D$ a $\Q$-Cartier Weil divisor on $X$, and $f:X'\to X$ a log resolution of $(X,\Delta)$. Then
$$ R^if_*\O_{X'}(L_{X',D}+\floor{E_{X',D}})=0\text{ for }i>0.$$
\end{lem}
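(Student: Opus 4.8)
The plan is to deduce the statement from the relative Kawamata--Viehweg vanishing theorem, exploiting the fact that $f$ is birational. Writing $L:=L_{X',D}$ and $E:=E_{X',D}$, the defining relation $f^*D\sim_{\Q}L+E$ of \thref{6} gives $L+\floor{E}\sim_{\Q}f^*D-\{E\}$, where $\{E\}:=E-\floor{E}$ is the fractional part. Since $f$ is a birational log resolution, every divisor pulled back from $X$ is numerically trivial over $X$, hence $f$-nef, and is automatically $f$-big because the generic fibre of $f$ is a point. This is the key simplification: no positivity hypothesis on $D$ is needed, and the whole assertion is local on $X$.

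Next I would bring in the klt hypothesis. As $f$ is a log resolution of $(X,\Delta)$, I can write $K_{X'}+\Delta_{X'}=f^*(K_X+\Delta)$ with $\Delta_{X'}$ supported on a simple normal crossing divisor and all coefficients $<1$ (this is precisely the klt condition). Substituting $K_{X'}=f^*(K_X+\Delta)-\Delta_{X'}$ into the previous relation yields
$$ L+\floor{E}-K_{X'}\sim_{\Q}f^*(D-K_X-\Delta)+\bigl(\Delta_{X'}-\{E\}\bigr),$$
so that, numerically over $X$, the divisor $L+\floor{E}$ is $K_{X'}$ plus the $f$-nef and $f$-big class $f^*(D-K_X-\Delta)$ plus the boundary-type term $\Delta_{X'}-\{E\}$. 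This term has simple normal crossing support and all coefficients $<1$, which is exactly the shape required to feed into relative Kawamata--Viehweg vanishing.

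The crucial point, and the step I expect to be the main obstacle, is that $\Delta_{X'}-\{E\}$ need not be effective: its negative part comes from the exceptional divisors of positive discrepancy (where $\Delta_{X'}$ has negative coefficient) together with $-\{E\}$, and its coefficients may even be $\le -1$. What rescues the argument is that this entire negative part is supported on the $f$-exceptional locus. I would handle it by the standard discrepancy bookkeeping: split $\Delta_{X'}-\{E\}$ into its effective part $P$ (a genuine sub-boundary with coefficients in $[0,1)$) and its negative part $-G$ with $G\ge 0$ and $f$-exceptional; then move the integral part $\floor{G}$ across to the divisor being pushed forward and absorb the remaining fractional $f$-exceptional coefficients. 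Because these negative contributions are $f$-exceptional, they do not obstruct the vanishing — this is the same mechanism that makes klt singularities rational — and relative Kawamata--Viehweg vanishing then yields $R^if_*\O_{X'}(L+\floor{E})=0$ for $i>0$.

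Throughout I would keep in mind that $\Q$-linear equivalence is compatible with the floor and round-up operations only after the equivalences are realised by explicit divisors, so some care is needed to ensure the roundings match the integral divisor $L+\floor{E}$; this is routine but must be checked. The overall shape of the argument mirrors the proof of \cite[Theorem 1.5]{Kim25}.
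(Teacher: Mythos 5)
Your reduction is, in substance, the one in the paper. The paper writes $K_{X'}=f^*(K_X+\Delta)+F-F'$ with $F$ integral, effective and $f$-exceptional and $\floor{F'}=0$, adds the reduced $f$-exceptional divisor $E'$ supported exactly on the components where $F'-\{E_{X',D}\}$ is negative, and applies relative Kawamata--Viehweg vanishing to $F+L_{X',D}+\floor{E_{X',D}}+E'\sim_{\Q,f}K_{X'}+\left(F'-\{E_{X',D}\}+E'\right)$, the term in parentheses being an SNC boundary with coefficients in $[0,1)$. Your divisor is the same one: $\ceil{G}=F+E'$ (note that you need the round-up $\ceil{G}$, not the floor $\floor{G}$ as written, so that the residual boundary $P+\left(\ceil{G}-G\right)$ is effective with coefficients in $[0,1)$; this is a minor slip). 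Up to this point the two arguments agree, including the observation that $f$-numerical triviality supplies the $f$-nef and $f$-big hypothesis for free.

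The genuine gap is your final step. Relative Kawamata--Viehweg gives $R^if_*\O_{X'}(L+\floor{E}+\ceil{G})=0$ for $i>0$; you still must pass to $R^if_*\O_{X'}(L+\floor{E})=0$, and this implication is not formal. From $0\to\O_{X'}(L+\floor{E})\to\O_{X'}(L+\floor{E}+\ceil{G})\to\mathcal{Q}\to 0$, the long exact sequence only expresses $R^if_*$ of the smaller sheaf as an extension involving $R^{i-1}f_*\mathcal{Q}$, where $\mathcal{Q}$ is supported on the exceptional locus and has no a priori vanishing; and Kawamata--Viehweg cannot be applied to $L+\floor{E}$ directly, because its associated boundary $\Delta_{X'}-\{E\}$ has negative coefficients (possibly $\le -1$), a situation in which no version of the vanishing theorem holds. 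So the sentence ``because these negative contributions are $f$-exceptional, they do not obstruct the vanishing'' is an assertion of precisely the statement that needs proof, not an argument for it. This is exactly the point where the paper invokes \cite[Lemma 4.4]{Kim25}: its proof of the present lemma consists of the Kawamata--Viehweg computation you reproduced \emph{plus} that citation, so the step you wave through is the one place where a substantive extra ingredient enters. Your instinct that this is ``the same mechanism that makes klt singularities rational'' points in the right direction, but that mechanism is itself a theorem requiring a duality-type argument (cf. \cite[Theorem 5.10]{KM98}), and invoking the analogy does not supply it. To complete the proof you must either prove such a removal lemma or cite one, e.g. \cite[Lemma 4.4]{Kim25}.
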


\begin{proof}
Recall from \thref{6} that:
$$ (f^*\O_X(D))^{\vee\vee}=\O_{X'}(L_{X',D}),\text{     }f^*D\sim_{\Q}L_{X'D}+E_{X',D},$$
where $E_{X',D}$ is an effective $f$-exceptional divisor.

Write
$$K_{X'}=f^*(K_X+\Delta)+F-F',$$
where $F$ is an effective, $f$-exceptional Cartier divisor, and $F'$ is a simple normal crossing divisor with $\floor{F'}=0$. Let $\{E_i\}$ be the prime $f$-exceptional divisors. Define
$$ a_i:=\begin{cases} 1 &\text{ if } \mathrm{mult}_{E_i}(F'-\{E\})<0,\text{ and } \\ 0 & \text{ otherwise. }\end{cases}$$
Set
$$ E':=\sum_i a_iE_i.$$
Then we have
$$ F+L_{X',D}+\floor{E_{X',D}}+E'\sim_{\Q,f}K_{X'}+F'-\{E_{X',D}\}+E',$$
and clearly $\floor{F'-\{E_{X',D}\}+E'}=0$. 

By the relative Kawamata-Viehweg vanishing theorem (cf. \cite[Theorem 3.3.4]{Fuj17}), 
$$R^if_*\O_{X'}(F+L_{X',D}+\floor{E_{X',D}}+E')=0\text{ for }i>0.$$
Next, observe that there is a natural injection 
$$\O_{X'}(L_{X',D}+\floor{E_{X',D}})\to \O_{X'}(F+L_{X',D}+\floor{E_{X',D}}+E').$$
By \cite[Lemma 4.4]{Kim25}, we conclude
$$R^if_*\O_{X'}(L_{X',D}+\floor{E_{X',D}})=0\text{ for all }i>0.$$
\end{proof}

We state \cite[Lemma 7.30]{Kol13} for the convenience of readers.

\begin{lem}[{cf. \cite[Lemma 7.30]{Kol13}}]\thlabel{lemma:>-1}
Let $f:X'\to X$ be a proper birational morphism between normal varieties. Let $D$ be a Weil divisor on $X'$ and assume that $D\sim_{\Q,f}D_h+D_v$ where $D_v$ is $f$-exceptional, $\floor{D_v}=0$ and $D_h$ is effective without exceptional components. Let $B$ be any effective, $f$-exceptional divisor. Then
$$ \O_X(-f_*D)=f_*\O_{X'}(-D)=f_*\O_{X'}(B-D).$$
\end{lem}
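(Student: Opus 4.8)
The plan is to reduce the triple equality to a single non-trivial inclusion after recording a chain of elementary ones. Working with local sections over an open $U\subseteq X$ and identifying sections with rational functions $\phi$ subject to divisorial inequalities, I note that $\phi\in (f_*\O_{X'}(-D))(U)$ means $\operatorname{div}_{X'}(\phi)\ge D$ on $f^{-1}U$, that $\phi\in (f_*\O_{X'}(B-D))(U)$ means $\operatorname{div}_{X'}(\phi)\ge D-B$, and that $\phi\in \O_X(-f_*D)(U)$ means $\operatorname{div}_X(\phi)\ge f_*D$ on $U$. Since $B\ge0$, the first two conditions immediately give $f_*\O_{X'}(-D)\subseteq f_*\O_{X'}(B-D)$. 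For $f_*\O_{X'}(B-D)\subseteq \O_X(-f_*D)$ I would use that a non-exceptional prime divisor $P$ on $X'$ and its image $\bar P$ define the same valuation $\ord_P=\ord_{\bar P}$ on the function field, that $\mult_P(B)=0$, and that $\mult_{\bar P}(f_*D)=\mult_P(D)$; letting $\bar P$ range over all prime divisors of $X$ meeting $U$ yields $\operatorname{div}_X(\phi)\ge f_*D$. Together these reduce the lemma to the reverse inclusion $\O_X(-f_*D)\subseteq f_*\O_{X'}(-D)$.

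For this reverse inclusion I would fix $\phi$ with $\operatorname{div}_X(\phi)\ge f_*D$ and set $\Theta:=\operatorname{div}_{X'}(\phi)-D$, the goal being $\Theta\ge0$. Decomposing $\Theta=\Theta_h+\Theta_v$ into its non-exceptional and exceptional parts, the valuation comparison above already shows $\Theta_h\ge0$, so only the exceptional coefficients are at issue. The key device is the auxiliary divisor $\Psi:=\Theta+D_h+D_v=\operatorname{div}_{X'}(\phi)-(D-D_h-D_v)$. Because $D\sim_{\Q,f}D_h+D_v$, the difference $D-D_h-D_v$ is $\Q$-Cartier and $f$-trivial, so $\Psi$ is $\Q$-Cartier and $-\Psi$ is $f$-nef. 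Its non-exceptional part is $\Theta_h+D_h\ge0$ (both summands effective), whence $f_*\Psi\ge0$. The Negativity lemma then forces $\Psi\ge0$, and comparing exceptional parts gives $\Theta_v\ge -D_v$.

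The final step is the arithmetic punchline that makes the hypothesis $\floor{D_v}=0$ pay off. Since $D_v$ is effective with $\floor{D_v}=0$, every coefficient of $-D_v$ lies in $(-1,0]$, so $\Theta_v>-1$ coefficient-wise; but $D$ is a \emph{Weil} divisor and $\phi$ is a rational function, so every coefficient of $\Theta=\operatorname{div}_{X'}(\phi)-D$ is an integer, and an integer strictly greater than $-1$ is $\ge0$. Hence $\Theta_v\ge0$, and with $\Theta_h\ge0$ this gives $\Theta\ge0$, closing the reverse inclusion and the proof. I expect the main obstacle to be the correct packaging of the Negativity lemma — one must check that $\Psi$ is genuinely $\Q$-Cartier and that $-\Psi$ is $f$-nef so the lemma applies — since the whole argument hinges on feeding the resulting near-effectivity $\Theta_v\ge -D_v$ into the integrality of Weil-divisor coefficients, and it is exactly $\floor{D_v}=0$ that bridges these two facts.
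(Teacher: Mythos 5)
Your proof is correct and is essentially the argument behind \cite[Lemma 7.30]{Kol13}, which the paper cites without reproving: the elementary inclusions reduce everything to showing $\O_X(-f_*D)\subseteq f_*\O_{X'}(-D)$, and that inclusion is settled exactly as you do it, by applying the negativity lemma to $\operatorname{div}_{X'}(\phi)-(D-D_h-D_v)$ (which is $\Q$-Cartier and $f$-numerically trivial, with effective pushforward) and then using integrality of the coefficients of $\operatorname{div}_{X'}(\phi)-D$ together with $\floor{D_v}=0$. No gaps.
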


\section{Proof of main theorem}

The purpose of this section is to prove \thref{main}, \thref{minicoro}, and \thref{coro}.

\begin{proof}
As mentioned in the introduction, the proof follows the same structure as \cite[The proof of Theorem 1.4]{Kim24}, comprising four main steps.

\medskip

\noindent\textbf{Step 1}. By Lemma \ref{lemlem}, each $f_i$ in the sequence of birational maps is a flip. Consider the flipping diagram:
$$
\begin{tikzcd}
X_{i-1}\ar[dashed]{rr}\ar["\varphi_i"']{rd}& &X_i\ar["\varphi^+_i"]{ld}\\
& Z_i&
\end{tikzcd}
$$
Applying Lemma \ref{KRss}, we obtain resolutions $f_i:X'_i\to X_{i-1}$ and $g_i:X'_i\to X_i$ such that $f_i,g_i,\varphi_i,\varphi^+_i$ fit into the following commutative diagram:
\begin{equation}\label{3'}
\begin{tikzcd}
& X'_i\ar["f_i"']{ld}\ar["g_i"]{rd}& \\
X_{i-1}\ar[dashed]{rr}\ar["\varphi_i"']{rd}& & X_i\ar["\varphi^+_i"]{ld}\\
& Z_i.&
\end{tikzcd}
\end{equation}
We claim that each $X_i$ is of Fano type. Indeed, let $\Delta'$ be a big and effective $\Q$-Weil divisor on $X$ such that $(X,\Delta')$ is klt and $K_X+\Delta'\sim_{\Q}0$. Denote by $\Delta'_i=$ the strict transform of $\Delta'$ on $X_i$. Then, by \cite[Theorem 3.7 (4)]{KM98}, $(X_i,\Delta'_i)$ is klt and $K_{X_i}+\Delta'_i\sim_{\Q}0$. Moreover, since $\Delta'_i$ is big, Remark \ref{rmk} ensures that $X_i$ is of Fano type.
\medskip

\noindent\textbf{Step 2}. For each integer $0\le i\le n$, let us set
$$ d_i:=h^1(X_i,\O_{X_i}(2mD_i)).$$
In this step, we show $d_{i-1}\ge d_i$ for all $1\le i\le n$. 

Consider the following Leray spectral sequence:
$$ \begin{aligned}E^{st}_2=H^s&\left(X_{i},R^tg_{i*}\O_{X'_i}\left(L_{X'_i,2mD_{i-1}}+\floor{E_{X'_i,2mD_{i-1}}}\right)\right)\\
&\implies H^{s+t}\left(X'_i,\O_{X'_i}\left(L_{X'_i,2mD_{i-1}}+\floor{E_{X'_i,2mD_{i-1}}}\right)\right).
\end{aligned}$$
Focus on the term
$$ E^{10}_2=H^1\left(X_{i},g_{i*}\O_{X'_i}\left(L_{X'_i,2mD_{i-1}}+\floor{E_{X'_i,2mD_{i-1}}}\right)\right).$$

By the Negativity lemma, we have
$$
L_{X'_i,2mD_{i-1}}+E_{X'_i,2mD_{i-1}}\sim_{\Q}L_{X'_i,2mD_i}+E_{X'_i,2mD_i}+F,
$$
where $F$ is an effective $g_i$-exceptional divisor on $X'_i$. Moreover,
\begin{equation}\label{4'} 
\begin{aligned}
L_{X'_i,2mD_{i-1}}+&\floor{E_{X'_i,2mD_{i-1}}}
\\ &\sim_{\Q}F-\left(-\left(L_{X'_i,2mD_i}+E_{X'_i,2mD_i}\right)+\{E_{X'_i,2mD_{i-1}}\}\right)
\\ &\sim_{\Q,g_i}F-(0+\{E_{X'_i,2mD_{i-1}}\}).
\end{aligned}
\end{equation}
and
\begin{equation}\label{4''} 
\begin{aligned}
L_{X'_i,2mD_{i-1}}+&\floor{E_{X'_i,2mD_{i-1}}}
\\ &\sim_{\Q}-\left(-\left(L_{X'_i,2mD_{i-1}}+E_{X'_i,2mD_{i-1}}\right)+\{E_{X'_i,2mD_{i-1}}\}\right).
\\ &\sim_{\Q,f_i}-\left(0+\{E_{X'_i,2mD_{i-1}}\}\right)
\end{aligned}
\end{equation}

By Lemma \ref{lemma:>-1}, together with \eqref{4'}, where
\begin{equation} \label{rf1}
D=-\left(L_{X'_i,2mD_{i-1}}+\floor{E_{X'_i,2mD_{i-1}}}\right)+F, D_h=0, D_v=\left\{E_{X'_i,2mD_{i-1}}\right\}, B=F,\end{equation}
it follows that 
$$g_{i*}\O_{X'_i}\left(L_{X'_i,2mD_{i-1}}+\floor{E_{X'_i,2mD_{i-1}}}\right)=\O_{X_{i}}(2mD_{i}).$$
Hence,
\begin{equation}\label{ot1} 
E^{10}_2=H^1(X_{i},\O_{X_{i}}(2mD_{i})).
\end{equation}
By Lemma \ref{van2},
$$ R^sf_{i*}\O_{X'_i}\left(L_{X'_i,2mD_{i-1}}+\floor{E_{X'_i,2mD_{i-1}}}\right)=0\text{ for all }s>0.$$
Moreover, using Lemma \ref{lemma:>-1} and \eqref{4''}, where
\begin{equation}\label{rf2}
D=-\left(L_{X'_i,2mD_{i-1}}+\floor{E_{X'_i,2mD_{i-1}}}\right), D_h=0, D_v=\left\{E_{X'_i,2mD_{i-1}}\right\}, B=0
\end{equation}
we obtain
$$ f_{i*}\O_{X'_i}\left(L_{X'_i,2mD_{i-1}}+\floor{E_{X'_i,2mD_{i-1}}}\right)=\O_{X_{i-1}}(2mD_{i-1}).$$
Thus, from the Leray spectral sequence, we obtain
\begin{equation} \label{ot2}
E^s:=H^{s}\left(X'_i,\O_{X'_i}\left(L_{X'_i,2mD_{i-1}}+\floor{E_{X'_i,2mD_{i-1}}}\right)\right)=H^{s}(X_{i-1},\O_{X_{i-1}}(2mD_{i-1}))
\end{equation}
for $s\ge 0$. In particular, the injection $E^{10}_2\hookrightarrow E^1$ with (\ref{ot1}) and (\ref{ot2}) implies $d_{i-1}\ge d_i$.

\noindent\textbf{Step 3}. In this step, we show that if $d_{i-1}=d_i$, then $mD_{i-1}$ cannot be Cartier. Let $1\le j\le n$. Consider the following Leray spectral sequence
\begin{equation}\label{111}
E^{st}_2=H^s(Z_{j},R^t\varphi_{j*}\O_{X_{j-1}}(2mD_{j-1}))\implies H^{s+t}(X_{j-1},\O_{X_{j-1}}(2mD_{j-1})).
\end{equation}
We observe that
$$ 
\begin{aligned}
\varphi_{j*}\O_{X_{j-1}}(2mD_{j-1})&\overset{(1)}{=}(\varphi_j\circ f_j)_*\O_{X'_j}\left(L_{X'_j,2mD_{j-1}}+\floor{E_{X'_j,2mD_{j-1}}}\right)
\\ &\overset{(2)}{=}(\varphi^+_j\circ g_j)_*\O_{X'_j}\left(L_{X'_j,2mD_{j-1}}+\floor{E_{X'_j,2mD_{j-1}}}\right)
\\ &\overset{(3)}{=}\varphi^+_{j*}\O_{X_j}(2mD_j).
\end{aligned}$$
Here, $(1)$ follows from Lemma \ref{lemma:>-1} and \eqref{4''} (with the setting of (\ref{rf2})), $(2)$ uses the commutative diagram \eqref{3'}, and $(3)$ again applies Lemma \ref{lemma:>-1} and \eqref{4'} (with the setting of (\ref{rf1})). 

Additionally, by the relative Kawamata-Viehweg vanishing theorem (cf. \thref{van1}) and another Leray spectral sequence, we obtain
\begin{equation}\label{pclaim} 
H^s(Z_j,\varphi_{j*}\O_{X_{j-1}}(2mD_{j-1}))=H^s(Z_j,\varphi^+_{j*}\O_{X_j}(2mD_j))=H^s(X_j,\O_{X_j}(2mD_j))
\end{equation}
for every $s\ge 0$. Our claim is:
\begin{equation}\label{claim}
H^2(X_j,\O_{X_j}(2mD_j))=0
\end{equation}

Consider the Leray spectral sequence
$$ {}^{'}E^{st}_2=H^s(Z_{j+1},R^t\varphi_{(j+1)*}\O_{X_j}(2mD_j))\implies H^{s+t}(X_j,\O_{X_j}(2mD_j)).$$
Note that ${}^{'}E^{02}_2=0$ since $R^2\varphi_{(j+1)*}\O_{X_j}(2mD_j)=0$ (cf. \cite[Lemma 02V7]{Stacks}), and ${}^{'}E^{11}_2=0$ because the support of $R^1\varphi_{(j+1)*}\O_{X_j}(2mD_j)$ has dimension $0$. Hence, the edge map
\begin{equation} \label{lclaim}
{}^{'}E^{20}_2\to {}^{'}E^2
\end{equation}
is surjective.

From this, we deduce:
$$
\begin{aligned}
H^2(X_{j+1},\O_{X_{j+1}}(2mD_{j+1}))=0 &\overset{(1)}{\iff} H^2(Z_{j+1},\varphi_{(j+1)*}\O_{X_j}(2mD_j))=0
\\ &\overset{(2)}{\implies} H^2(X_j,\O_{X_j}(2mD_j))=0,
\end{aligned}
$$
where (1) follows from \eqref{pclaim} and (2) arises from the surjectivity of \eqref{lclaim}. Thus, 
$$H^2(X_n,\O_{X_n}(2mD_n))=0$$
implies \eqref{claim}, and the vanishing $H^2(X_n,\O_{X_n}(2mD_n))=0$ follows from \thref{van1}.

The first four terms of the five-term exact sequence for \eqref{111} is
$$
\begin{aligned}
    0\to H^1(Z_i,\varphi_{i*}\O_{X_{i-1}}&(2mD_{i-1}))\overset{\alpha}{\to} H^1(X_{i-1},\O_{X_{i-1}}(2mD_{i-1}))
    \\ &\to H^0(Z_{i},R^1\varphi_{i*}\O_{X_{i-1}}(2mD_{i-1}))\to H^2(Z_{i},\varphi_{i*}\O_{X_{i-1}}(2mD_{i-1})).
\end{aligned}$$
Note that the first term is $H^1(X_{i},\mathcal{O}_{X_{i}}(2mD_{i}))$ by \eqref{pclaim}. Together with $d_{i-1}=d_i$, \eqref{pclaim} and \eqref{claim}, we show that $\alpha$ is an isomorphism, and
$$ H^0(Z_i,R^1\varphi_{i*}\O_{X_{i-1}}(2mD_{i-1}))=0.$$
Moreover, since the image of the exceptional locus of $\varphi_{i}$ has dimension $0$, we conclude
$$ R^1\varphi_{i*}\O_{X_{i-1}}(2mD_{i-1})=0.$$

Suppose, for contradiction, that $mD_{i-1}$ is Cartier. Define
$$ \mathcal{L}:=\O_{X_{i-1}}(mD_{i-1})$$
By \cite[Theorem 1]{Kaw91}, there is a rational curve $C\subseteq X_{i-1}$ that is contracted by $\varphi_i$. Consider the exact sequence
\begin{equation} \label{exact}0\to \mathcal{Q}\to \mathcal{L}^{\otimes 2}\to \mathcal{L}^{\otimes 2}|_C\to 0\end{equation}
for some coherent sheaf $\mathcal{Q}$ on $X_{i-1}$. Since every fiber of $\varphi_i$ has dimension $\le 1$, we have $R^2\varphi_{i*}\mathcal{Q}=0$ (cf. \cite[Lemma 02V7]{Stacks}). Thus, applying $\varphi_{i*}$ on \eqref{exact} yields
$$ H^1(C,\mathcal{L}^{\otimes 2}|_C)=R^1\varphi_{i*}\mathcal{L}^{\otimes 2}|_C=0.$$
Let $\nu:\P^1\to C$ be the normalization. From
$$ 0\to \mathcal{L}^{\otimes 2}|_C\to \nu_*\nu^*(\mathcal{L}^{\otimes 2}|_C)\to \mathcal{Q}'\to 0$$
for some coherent sheaf $\mathcal{Q}'$ on $C$, we obtain
$$ H^1(C,\nu_*\nu^*(\mathcal{L}^{\otimes 2}|_C))=0.$$
But $\nu^*(\mathcal{L}^{\otimes 2}|_C)=(\nu^*(\mathcal{L}|_C))^{\otimes 2}$. A Leray spectral sequence plus \cite[Lemma 02OE]{Stacks} implies 
$$
H^1(\P^1,(\nu^*(\mathcal{L}|_C))^{\otimes 2})=0.
$$

Writing $\nu^*(\mathcal{L}|_C)=\O_{\P^1}(-M)$ for some positive integer $M$ means that $\mathcal{L}$ restricts to an anti-ample line bundle on $\P^1$. Hence
$$ H^1(\P^1,\O_{\P^1}(-2M))=0,$$
which contradicts the fact that $H^1(\P^1,\O_{\P^1}(-2M))$ is always nonzero for $M>0$. This contradiction shows $mD_{i-1}$ cannot be Cartier.

\medskip

\noindent\textbf{Step 4}. 
We now complete the proof. By the previous step (Step 3), if $mD_{i-1}$ is Cartier for some $i$, then necessarily $d_{i-1}>d_{i}$. Hence, each time $mD_{i-1}$ is Cartier, the sequence $\{d_i\}$ strictly decreases.

Suppose $N>1+h^1(X,\O_X(2mD))$, where $N$ is the number of $i$ such that $mD_i$ is Cartier. Then, for some $1\le i\le n\le N$, the inequality $d_i\le -1$ would be enforced. This is impossible since each $d_i$ is defined as a cohomological dimension and is therefore non-negative:
$$ d_i:=\dim_{\C}H^1(X_i,\O_{X_i}(2mD_i))\ge 0.$$

\end{proof}

\begin{proof}[Proof of \thref{minicoro}]

Set $m=1$, and let $D_i$ be the strict transform of $D$ on $X_i$. Let $N$ be the number of indices $0\le i\le n$ such that $D_i$ is Cartier, and let $N'$ be the number of $X_i$ that are smooth. Since a Weil divisor on a smooth variety is necessarily Cartier, we have $N'\le N$. Moreover, by Theorem \ref{main}, $N\le 1+h^1(X,2D)$, proving Corollary \ref{minicoro}.
\end{proof}

\begin{proof}[Proof of \thref{coro}]



Let $\Delta$ be an effective $\Q$-Weil divisor on $X$ such that $(X,\Delta)$ is klt and $-(K_X+\Delta)$ is big and nef. By \cite[Theorem 4.1]{Fuj11}, there is a small $\Q$-factorialization $f:(X',\Delta')\to (X,\Delta)$. Since
$$ K_{X'}+\Delta'=f^*(K_X+\Delta)$$
it follows that $-(K_{X'}+\Delta')$ is also big and nef. In particular, $X'$ is of Fano type. Furthermore, $f$ is small, so $f^*D$ is movable.

Let $g:X''\to X'$ be a resolution. Because any klt pair has rational singularities (cf. \cite[Theorem 5.22]{KM98}), we get
$$ R^i(f\circ g)_*\O_{X''}=0,\text{ and }R^ig_*\O_{X''}=0\text{ for }i>0.$$
Then, by the Grothendieck spectral sequence
$$ E^{st}_2=R^sf_*R^tg_*\O_{X''}\implies R^{s+t}(f\circ g)_*\O_{X''},$$
we deduce $R^sf_*\O_{X'}=R^s(f\circ g)_*\O_{X''}=0$ for $s>0$. Hence, by the Leray spectral sequence, $H^1(X',\O_{X'}(f^*(mD)))=0$ for any sufficiently divisible $m$.

Note that $X'$ is a Mori dream space (cf. \cite[Corollary 1.3.1]{BCH+10}). Therefore there is a $D$-MMP
$$ X_0:=X'\dashrightarrow X_1\dashrightarrow \cdots \dashrightarrow X_n.$$
Let $D_i$ be the strict transform of $f^*D$ on $X_i$. Choose an integer $m>0$ so that each $mD_i$ is Cartier. We may assume $h^1(X_0,2mD_0)=0$. By \thref{main}, it follows $n+1\le 1$, i.e., $n=0$. Hence the only possibility is that $D_0$ is nef, which implies $f^*D$ is nef on $X'$. Since $X'$ is a Mori dream space, $f^*D$ is semiample, and thus $D$ itself is semiample. This completes the proof.
\end{proof}

\begin{rmk}\thlabel{rmk1}

In \thref{coro}, the hypothesis that $D$ is movable is essential. For example, let $f:X\to \P^3$ be a blowup of $\P^3$ at a single point, let $H$ be an ample Cartier divisor on $\P^3$, and let $E$ be the effective $f$-exceptional divisor on $X$. Then for any positive integer $n$, we obtain
$$ H^1(\P^3,f_*\O_X(n(f^*H+E)))=H^1(\P^3,\O_{\P^3}(nH))=0,$$
and
$$ H^0(\P^3,R^1f_*\O_X(nf^*H+nE))=0.$$
Note also that $R^1f_*\O_X(nE)=0$. Indeed, $E\cong \P^2$, and we have an exact sequence
$$ 0\to \O_X((n-1)E)\to \O_X(nE)\to \O_{\P^2}(-n)\to 0.$$
Pushing forward via $f$ induces a surjection
$$ R^1f_*\O_X((n-1)E)\to R^1f_*\O_X(nE)\to 0.$$
Becuase $R^1f_*\O_X=0$ (cf. \cite[Theorem 5.22]{KM98}), an induction on $n$ shows $R^1f_*\O_X(nE)=0$. Thus, by the Leray spectral sequence,
$$H^1(X,\O_X(n(f^*H+E)))=0\text{ for any }n>0.$$
Nevertheless, $f^*H+E$ fails to be nef. Consequently, the movability assumption on $D$ is Corollary \ref{coro} is indeed necessary.
\end{rmk}


\begin{thebibliography}{BCHM10}

\bibitem[Ben85]{Ben85}
X. Benveniste: \textit{Sur le cone des 1-cycles eﬀectifs en dimension 3}, 
Math. Ann. 
\textbf{272} 
(1985), 
257-265.

\bibitem[BCHM10]{BCH+10}
C. Birkar, P. Cascini, C. Hacon,
              and J. McKernan:
\emph{Existence of minimal models for varieties of log general type},
J. Amer. Math. Soc.
\textbf{23}
(2010),
no. 2,
405--468.

\bibitem[Fuj11]{Fuj11}
O. Fujino:
\textit{Semi-stable minimal model program for varieties with trivial
              canonical divisor},
Proc. Japan Acad. Ser. A Math. Sci.
\textbf{87}
(2011),
no. {3},
25--30.

\bibitem[Fuj17]{Fuj17}
O. Fujino:
\emph{Foundations of the minimal model program},
MSJ Memoirs,
vol. 35,
Mathematical Society of Japan,
Tokyo,
2017.

\bibitem[Kaw91]{Kaw91}
Y. Kawamata:
\emph{On the length of an extremal rational curve},
Invent. Math.
\textbf{105}
(1991),
no. 3,
609--611.



\bibitem[Kim24]{Kim24}
D. Kim:
\emph{On diminished multiplier ideal and the termination of flips},
arXiv preprint arXiv:2405.11902 (2024).

\bibitem[Kim25]{Kim25}
D. Kim:
\emph{Rational singularities and {$q$}-birational morphisms},
{Internat. J. Math.}
\textbf{36}
(2025),
no. 1,
{Paper No. 2450065}.

\bibitem[Kol13]{Kol13}
J. Koll{\'a}r:
\textit{Singularities of the minimal model program},
Cambridge Tracts in Mathematics,
vol. 200,
Cambridge University Press,
Cambridge,
(2013).

\bibitem[KM98]{KM98}
J. Koll{\'{a}}r and S. Mori: \textit{Birational geometry of algebraic varieties}, Cambridge Tracts in Mathematics, vol. 134. Cambridge University Press, Cambridge, 1998.

\bibitem[PS09]{PS09}
Y. Prokhorov and V. Shokurov:
\emph{Towards the second main theorem on complements}
J. Algebraic Geom. 
\textbf{18}
(2009), 
151--199.

\bibitem[Stacks]{Stacks}
A. J. de Jong et al.: \textit{The Stacks Project}, Available at \url{http://stacks.math.columbia.edu}.

\end{thebibliography}
\end{document}